\DeclareMathOperator{\R}{\mathbb{R}}
\DeclareMathOperator{\N}{\mathbb{N}}
\DeclareMathOperator{\E}{\mathbb{E}}
\DeclareMathOperator{\eps}{\varepsilon}
\numberwithin{equation}{section}
\newtheorem{theorem}{Theorem}[section]
\newtheorem{lemma}[theorem]{Lemma}
\theoremstyle{definition}
\newtheorem{definition}[theorem]{Definition}
\newtheorem{remark}[theorem]{Remark}
\newcommand{\cA}{\mathcal{A}}
\newcommand{\bE}{\mathbb{E}}
\newcommand{\bR}{\mathbb{R}}
\renewcommand{\P}{\mathbb{P}}
\newcommand{\dd}{\mathrm{d}}
\newcommand{\UCb}{\mathrm{UC}_{\rm b}}
\def\eps{\varepsilon}
\newcommand{\cbar}{\overline{c}}
\newcommand{\cbarbar}{\overline{\overline{c}}}
\title[Viscosity Solutions to Abstract Cauchy Problems]{Existence of Viscosity Solutions to Abstract Cauchy Problems via Nonlinear Semigroups}
\author{Fabian Fuchs}
\address{Center for Mathematical Economics, Bielefeld University, Germany}
\email{fabian.fuchs@uni-bielefeld.de}
\author{Max Nendel}
\address{Department of Statistics and Actuarial Science, University of Waterloo, Canada}
\email{mnendel@uwaterloo.ca}
\thanks{This work was funded by the Deutsche Forschungsgemeinschaft (DFG, German Research Foundation) -- SFB 1283/2 2021 -- 317210226. Part of this research was conducted during a stay of the second named author at the Sydney Mathematical Research Institute (SMRI) as part of the International Visitor Program 2024/2025. The second named author is grateful for the financial support and hospitality of the SMRI}
\date{\today}
\begin{document}

\begin{abstract}
    In this work, we provide conditions for nonlinear monotone semigroups on locally convex vector lattices to give rise to a generalized notion of viscosity solutions to a related nonlinear partial differential equation.\
    The semigroup needs to satisfy a convexity estimate, so called $K$-convexity, w.r.t.\ another family of operators, defined on a potentially larger locally convex vector lattice. We then show that, under mild continuity requirements on the bounding family of operators, the semigroup yields viscosity solutions to the abstract Cauchy problem given in terms of its generator in the larger locally convex vector lattice.\ We apply our results to drift control problems for infinite-dimensional L\'evy processes and robust optimal control problems for infinite-dimensional Ornstein-Uhlenbeck processes.\smallskip \ \\
    \textit{Keywords:} nonlinear semigroup, viscosity solution, locally convex vector lattice, generalized convexity, infinite-dimensional optimal control  \smallskip \ \\
    \textit{MSC 2020:} Primary: 47H20; 35D40; Secondary: 49L25; 35K90
    % 47H20: Semigroups of nonlinear operators
    % 35K90: Abstract parabolic equations
    % 49L25: Viscosity solutions to Hamilton-Jacobi equations in optimal control and differential games
    % 35A01: Existence problems for PDEs: global existence, local existence, non-existence
    % 35D40: Viscosity solutions to PDEs
    % 37L05: General theory of infinite-dimensional dissipative dynamical systems, nonlinear semigroups, evolution equations
    % 47D07: Markov semigroups and applications to diffusion processes {For Markov processes
\end{abstract}

\maketitle

%%%%%%%%%%%%%%%%%%%%%%%%%%%%%%%%%%%%%

\section{Introduction}

In this work, we discuss under which conditions classes of nonlinear monotone semigroups, 
defined on a locally convex vector lattice $(X, \tau_X)$, give rise to viscosity solutions to the abstract Cauchy problem
\begin{align}
    u'(t) &= Lu(t), \label{eq:intro:cauchy} \\ 
    u(0) &= u_0, \nonumber
\end{align}
where $L$ is the infinitesimal generator of the semigroup $S$ and $u_0 \in X$.
We require the semigroup to satisfy a convexity-like estimate, which we call $K$-convexity, cf.\ Definition \ref{def:k-convex}, where the family of operators $K$ can be defined on a potentially larger locally convex vector lattice $(Y, \tau_Y)$ with a coarser topology $\tau_Y$ in the sense that the identity $X\to Y$ is continuous.\
Replacing convexity by $K$-convexity for the semigroup $S$ broadens the class of nonlinear semigroups we are able to treat, notably including semigroups with $\inf$-$\sup$-type nonlinearities.\ Additionally, this relaxation allows us to transfer continuity requirements from the semigroup $S$ to the family of operators $K$ and consider the semigroup's generator in the larger space $Y$ with its coarser topology $\tau_Y$ rather than in $(X, \tau_X)$.\ 

The semigroups and partial differential equations treated in this paper mainly arise from optimal control theory.\ In a finite-dimensional setting, the monographs \cite{FlSo06,Ph09,YoZh99} introduce a general theory and relate it to applications in Economics and Finance. For an introduction with applications in the infinite-dimensional context, see \cite{FaGoSw17}. It is of note that \cite{FlSo06} also provides a semigroup perspective, which, however, is not explored in depth there.

Other prominent examples that give rise to semigroups which can be treated in the presented framework are related to pricing under uncertainty. 
In the financial literature, such problems are often studied using nonlinear expectations. The framework of $G$-Brownian Motion or, more generally $G$-L\'evy processes, developed in \cite{Pe19,hu.2021, NeNu2017}, is used to study problems related to asset pricing under model uncertainty. Similarly, the works \cite{Ho16,DeKuNe20,Ku19} study L\'evy processes on sublinear expectation spaces and their relation to Hamilton-Jacobi-Bellman equations from the angle of semigroup theory.\
Volatility uncertainty arises in many financial applications and has, for example, been studied by \cite{AvLePa95} in the context of derivative pricing, by \cite{EpJi13} for utility maximization problems, and by \cite{Vo14} in relation to market completeness.

All of the previously mentioned examples lead to convex operators.\ However, in the context of controlled processes under uncertainty, also non-convex operators of $\inf$-$\sup$-type naturally appear, cf.\ \cite{NeSe24,NeSe24b} for examples related to Markov Decision Processes and \cite{BaEcKu21,BaNePa23} for works on robust optimization problems, as well as differential games, cf.\ \cite{Is65,FrQu92,Ga08,EvSo84}.

To describe solutions to Cauchy problems of type \eqref{eq:intro:cauchy} in our abstract setting, we introduce a generalized notion of viscosity solutions, making the dependence of the test functions on the domain $D(L)$ of the generator and a set $M$ of continuous linear functionals, where test functions are required to touch the solution, explicit. The notion of viscosity solutions was introduced by \cite{CrLi83} as a solution concept for Hamilton-Jacobi equations, for which no classical solutions exist or uniqueness of weak solutions fails. The name \emph{viscosity solution} stems from the vanishing viscosity method, used by the authors to show the existence of such solutions.
Most modern existence proofs, however, either use versions of Perron's method, see e.g.\ \cite{CIL92,Ba94}, or show the viscosity property of a candidate solution directly. In optimal control theory, this candidate solution is the value function of the related optimal control problem, cf.\ \cite{FlSo06,YoZh99,Ph09,FaGoSw17}.
The approach in this work abstracts the methodology from optimal control and transfers the question of existence of viscosity solutions to the existence of semigroups with suitable properties, following the paradigm of classical semigroup theory.
The works \cite{GoNeRo24, NeRo21, DeKuNe20} take a similar perspective but have largely disjoint frameworks as each of them focuses on different applications. In the present paper, we provide a unified framework and extend the above results on the existence of viscosity solutions to arbitrary locally convex vector lattices and $K$-convex operators.

The generality of the setting in this work is partially motivated by a rising interest in infinite-dimensional problems, cf.\ \cite{BeLi24, BaChEkQiTa25, SoTiZh24} for recent examples. These problems are explicitly covered by our setup, which we illustrate on two examples, see Section \ref{sec:examples}.
We first treat a drift-controlled infinite-dimensional L\'evy process and then a drift-controlled Ornstein-Uhlenbeck process under volatility uncertainty.
The value functions of both examples are nonlinear due to the optimization over controls. The presence of volatility uncertainty in the second example leads to a non-convex value function of $\inf$-$\sup$-type. Using our main result, we characterize these value functions as viscosity solutions to the abstract Cauchy problem given in terms of the generator of the value function.

The rest of the paper is organized as follows. In Section \ref{sec:setup}, we introduce our setup and main concepts such as $K$-convexity and our notion of viscosity solution. Moreover, we show that, for $K$-convex semigroups, continuity requirements can be transferred from the semigroup to the family $K$. In Section \ref{sec:main_result}, we state and prove the main result of this work, Theorem \ref{thm:existence}. Section \ref{sec:examples} contains two applications that  illustrate the use of the main theorem in the context of controlled infinite-dimensional L\'evy processes and controlled infinite-dimensional Ornstein-Uhlenbeck processes under uncertainty.

\section{Setup and preliminaries}\label{sec:setup}

Throughout, let $(Y,\tau_Y)$ be a locally convex vector lattice, i.e., $Y$ is a vector lattice, endowed with a locally convex topology $\tau_Y$ induced by a set of lattice seminorms. Recall that a seminorm $p\colon Y\to [0,\infty)$ is called a lattice seminorm if, for all $u,v\in Y$,
\begin{equation}\label{eq.def.lattice-semi}
    |u|\leq |v|\quad\text{implies}\quad p(u) \leq  p(v),
\end{equation}
where $|u|:=u\vee(-u)$ for $u\in Y$. 

As usual, $Y_+ \coloneqq \{u \in Y \,|\, u \geq 0\}$ denotes the set of all positive elements of $Y$ and $Y'$ the topological dual space of $Y$, i.e., the set of all continuous, linear functionals $Y \to \bR$. Analogously, we denote the set of all positive dual elements of $Y$ as 
\begin{equation*}
    Y'_+ \coloneqq \big\{\mu \in Y' \,\big|\, \mu u \geq 0\text{ for all } u \in Y_+ \big\}.
\end{equation*}
Moreover, we consider a sublattice $X\subset Y$, endowed with a locally convex topology $\tau_X$ induced by a set of lattice seminorms such that the identity $X\to Y$ is continuous.

\begin{remark}\label{rem.mixed.topology}
Typical choices for the spaces $X$ and $Y$ in the context of classical viscosity theory are $Y$ consisting of all functions $\R^d\to \R$ with $d\in \N$ endowed with the pointwise order and the topology of pointwise convergence. In this case, $X$ can, for example, be chosen as the space ${\rm C}_{\rm b}(\R^d)$ of all bounded continuous functions $\R^d\to \R$, endowed with the so-called mixed topology, cf.\ \cite{Se72,Wi61,GK01,GoNeRo24}. Throughout, we are only interested in sequential properties related to the topologies $\tau_X$ and $\tau_Y$. In view of this fact, recall that a sequence $(f_n)_{n\in \N}\subset {\rm C}_{\rm b}(\R^d)$ converges to a function $f\in {\rm C}_{\rm b}(\R^d)$ w.r.t.\ the mixed topology if and only if the sequence $(f_n)_{n\in \N}$ is uniformly bounded and converges to $f$ uniformly on compacts, i.e.,
\[
\sup_{n\in \N}\|f_n\|_\infty<\infty\quad \text{and}\quad \lim_{n\to \infty}\sup_{x\in C} |f_n(x)-f(x)|=0
\]
for all compact sets $C\subset \R^d$, where $\|\cdot\|_\infty$ denotes the supremum norm on ${\rm C}_{\rm b}(\R^d)$.

\end{remark}

We now introduce the central object of our study.

\begin{definition}[Monotone semigroup]\label{def:semigroup}
   A family $S = (S(t))_{t\geq 0}$ of operators $X \to X$ is called a \emph{monotone semigroup on $X$} if it satisfies the following properties:
    \begin{enumerate}[(i)]
        \item $S(0) u = u$ for all $u \in X$.
        \item $S(t) S(s) u = S(t+s) u$ for all $t, s \geq 0$ and $u \in X$.
        \item For all $t \geq 0$, the operator $S(t)$ is monotone, i.e., $S(t) u \leq S(t) v$ for all $u,v\in X$ with $u \leq v$.
    \end{enumerate}
\end{definition}

Observe that the definition of a monotone semigroup only consists of algebraic and order-theoretic properties.

\begin{definition}[$K$-convexity]\label{def:k-convex}
    Let $K = (K(t))_{t\geq 0}$ be a family of operators $X \to Y$. Then, a monotone semigroup $S = (S(t))_{t \geq 0}$ is called \emph{$K$-convex} if, for all $u, v \in X$ and $\lambda \in [0, 1]$,
    \begin{equation*}
        S(t)\left(\lambda u + (1-\lambda) v \right) \leq \lambda S(t) u + (1-\lambda) K(t) v.
    \end{equation*}
\end{definition}

Before we continue our discussion, we briefly illuminate the concept of $K$-convexity in the following remark.

\begin{remark}\label{rem.Kconvex}
Let $S=(S(t))_{t\geq 0}$ be a monotone semigroup.
 \begin{enumerate}[a)]
 \item[a)] If $S$ is convex, i.e.,
\[
 S(t)\big(\lambda u+(1-\lambda)v\big)\leq \lambda S(t)u+(1-\lambda)S(t)v
\]
for all $t\geq0$, $u,v\in X$, and $\lambda\in [0,1]$, then $S$ is $S$-convex.
\item[b)]  If $S$ is $K$-convex and there exists some family $L=(L(t))_{t\geq0}$ of operators $X\to Y$ with $L(t)u \geq K(t)u$ for all $t\geq 0$ and $u\in X$, then $S$ is also $L$-convex.
\item[c)] Assume that $S$ is $K$-convex.\ Then, choosing $\lambda = 0$ in the definition of $K$-convexity, we find that the family $K$ is an upper bound of $S$, i.e., for $t\geq 0$ and $u \in X$, we have
    \begin{equation}\label{eq.upperbound}
        S(t)u \leq K(t)u.
    \end{equation}
    On the other hand,
    \begin{equation}\label{eq.remark.Kconvex.c}
    2S(t)0-S(t)u\leq K(t)(-u)
    \end{equation}
    for all $t\geq0$ and $u\in X$. Indeed, let $t\geq 0$ and $u\in X$. Then,
   \[
    S(t)0=S(t)\Big(\tfrac12 u+\tfrac12 (-u)\Big)\leq \tfrac12 S(t)u+\tfrac12 K(t)(-u).
   \]
   \item[d)] Assume that $S$ is $K$-convex. Then, by part c),
   \[
    S\big(\lambda u+(1-\lambda)v\big)\leq \lambda S(t) u+(1-\lambda)K(t)v\leq \lambda K(t)u+(1-\lambda)K(t)v
   \]
   for all $t\geq0$, $u,v\in X$, and $\lambda\in [0,1]$.
 \end{enumerate}
\end{remark}

\begin{lemma}\label{lem.Kconvex}
Let $S$ be a $K$-convex monotone semigroup with $S(t)0\geq 0$ for all $t\geq0$. Then, for all $t\geq0$ and $u,v\in X$,
 \begin{equation}\label{eq.lem.Kconvex1}
  |S(t)u-v|\leq |K(t)u-v|+|K(t)(-u)+v|. 
\end{equation}
In particular, 
\begin{equation}\label{eq.lem.Kconvex2}
p\big(S(t)u-v\big)\leq p\big(K(t)u-v\big)+p\big(K(t)(-u)+v\big)
\end{equation}
for all $t\geq 0$, $u,v\in X$, and every lattice seminorm $p\colon Y\to [0,\infty)$.
\end{lemma}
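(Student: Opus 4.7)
The plan is to estimate $S(t)u - v$ and its negative separately using the two inequalities collected in Remark \ref{rem.Kconvex} c), and then combine them via the definition of the modulus in a vector lattice. The assumption $S(t)0\geq 0$ is precisely what is needed to remove the additive term $2S(t)0$ appearing in \eqref{eq.remark.Kconvex.c}.

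First, I would invoke \eqref{eq.upperbound} to obtain the upper bound
\[
 S(t)u - v \;\leq\; K(t)u - v \;\leq\; |K(t)u - v|.
\]
Next, using \eqref{eq.remark.Kconvex.c} rewritten as $-S(t)u \leq K(t)(-u) - 2S(t)0$ together with $S(t)0\geq 0$, I get $-S(t)u\leq K(t)(-u)$, and therefore
\[
 v - S(t)u \;\leq\; v + K(t)(-u) \;\leq\; |K(t)(-u) + v|.
\]
Since $|S(t)u - v| = (S(t)u - v)\vee (v - S(t)u)$, taking the lattice supremum of the two previous displays yields
\[
 |S(t)u - v| \;\leq\; |K(t)u - v|\vee |K(t)(-u) + v| \;\leq\; |K(t)u - v| + |K(t)(-u) + v|,
\]
where the last step uses that both summands on the right are positive, so their sum dominates their supremum. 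This proves \eqref{eq.lem.Kconvex1}.

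For \eqref{eq.lem.Kconvex2}, I would apply a lattice seminorm $p$ to \eqref{eq.lem.Kconvex1}. Since $p$ is a lattice seminorm and $\big| |w| \big| = |w|$, we have $p(w) = p(|w|)$ for every $w \in Y$; combining this with the monotonicity property \eqref{eq.def.lattice-semi} and the subadditivity of $p$ gives
\[
 p\bigl(S(t)u - v\bigr) \leq p\bigl(|K(t)u - v| + |K(t)(-u) + v|\bigr) \leq p\bigl(K(t)u - v\bigr) + p\bigl(K(t)(-u) + v\bigr),
\]
which is exactly \eqref{eq.lem.Kconvex2}. There is no real obstacle here; the only subtle points are to use $S(t)0\geq 0$ at the correct moment in the second estimate and to keep in mind the identity $p(w)=p(|w|)$ for lattice seminorms in the last step.
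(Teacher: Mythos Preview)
Your proof is correct and follows essentially the same route as the paper: bound $S(t)u-v$ from above via \eqref{eq.upperbound}, bound $-(S(t)u-v)$ via \eqref{eq.remark.Kconvex.c} together with $S(t)0\geq 0$, and then pass to the modulus and apply the lattice seminorm properties. The only cosmetic difference is that you record the slightly sharper intermediate inequality $|S(t)u-v|\leq |K(t)u-v|\vee |K(t)(-u)+v|$ before weakening to the sum, whereas the paper passes directly to the sum; the underlying argument is identical.
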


\begin{proof}
 First, observe that, by \eqref{eq.upperbound}, 
\[
 S(t)u-v\leq K(t)u-v
\]
for all $t\geq0$ and $u,v\in X$. Moreover, since $S(t)0\geq0$ for all $t\geq0$, by \eqref{eq.remark.Kconvex.c}, it follows that
\[
-\big(S(t)u-v\big)=-S(t)u+v\leq K(t)(-u)+v-2S(t)0\leq K(t)(-u)+v
\]
for all $t\geq 0$ and $u,v\in X$.\ This proves that \eqref{eq.lem.Kconvex1} holds. Now, let $p\colon Y\to [0,\infty)$ be a lattice seminorm. Then, by \eqref{eq.def.lattice-semi}, \eqref{eq.lem.Kconvex1}, and the triangle inequality,
\begin{align*}
 p\big(S(t)u-v\big)=p\big(|S(t)u-v|\big)&\leq p\big(|K(t)u-v|+|K(t)(-u)+v|\big)\\
 &\leq p\big(|K(t)u-v|\big)+p\big(|K(t)(-u)+v|\big)\\
 &=p\big(K(t)u-v\big)+p\big(K(t)(-u)+v\big).
\end{align*}
The proof is complete.
\end{proof}

The following lemma extends a classical estimate for convex operators to the $K$-convex case.

\begin{lemma}\label{lem.difference-quotient}
Let $S$ be a $K$-convex monotone semigroup. Then, for all $t\geq0$, $u,v\in X$, and $h>0$,
\begin{equation}\label{eq.lem.difference-quotient}
\frac{S(t)u -S(t)v}{h}\leq K(t)\bigg(v+\frac{u-v}{h}\bigg)-S(t)v.
\end{equation}
\end{lemma}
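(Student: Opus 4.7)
The plan is to realize $u$ as a convex combination of $v$ and the displaced point $w := v + (u-v)/h$, and then apply the $K$-convexity of $S$ directly. The key algebraic identity is
\[
(1-h) v + h w = (1-h) v + h \big(v + (u-v)/h\big) = v + (u-v) = u,
\]
which, for $h \in [0,1]$, exhibits $u$ as a genuine convex combination of $v$ and $w$ with weights $1-h$ and $h$, respectively.

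Restricting to $h \in (0,1]$ and applying Definition \ref{def:k-convex} with $v$ in the first slot (receiving $S(t)$) and $w$ in the second slot (receiving $K(t)$) then yields
\[
S(t) u \leq (1-h)\, S(t) v + h\, K(t)\big(v + (u-v)/h\big).
\]
Subtracting $S(t) v$ from both sides and dividing by $h > 0$ produces exactly \eqref{eq.lem.difference-quotient}, so in this range no additional work is needed.

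The main obstacle I anticipate is the range $h > 1$, where the weight $1-h$ becomes negative and the convex-combination identity above is no longer admissible in Definition \ref{def:k-convex}. The alternative representation $w = (1/h) u + (1-1/h) v$ is a bona fide convex combination when $h \geq 1$, but $K$-convexity applied there only bounds $S(t) w$ from above in terms of $S(t) u$ and $K(t) v$, rather than bounding $S(t) u$ from above in terms of $K(t) w$, so it does not close the argument. Thus my approach cleanly covers the regime $h \in (0,1]$ relevant for the difference-quotient limit $h \to 0^+$ suggested by the lemma's name, and establishing the estimate for $h > 1$ would require either tacit restriction to $h \leq 1$ or a separate structural ingredient beyond $K$-convexity.
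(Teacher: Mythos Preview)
Your argument is exactly the paper's: write $u=(1-h)v+h\big(v+(u-v)/h\big)$, apply $K$-convexity, subtract $S(t)v$, and divide by $h$. Your observation about the range $h>1$ is also justified---the definition of $K$-convexity requires $\lambda=1-h\in[0,1]$, so the paper's own proof likewise only covers $h\in(0,1]$, and in fact the estimate can fail for $h>1$ (e.g.\ for a strictly convex $S$ with $K=S$); the lemma is, however, only ever invoked with $h\in(0,1)$ in the proof of Theorem~\ref{thm:existence}, so the restriction is harmless.
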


\begin{proof}
Let $t\geq0$, $u,v\in X$, and $h>0$. Then, using the $K$-convexity of $S$ as
    \begin{align*}
        S(t)u - S(t)v &= S(t)\Bigg( (1-h)v + h\bigg(v+\frac{u - v}{h} \bigg)\Bigg) - S(t)v\\
        &\leq (1-h)S(t)v + h K(t)\bigg(v+ \frac{u-v}{h}\bigg) - S(t)v\\
        &= h\Bigg(K(t)\bigg(v+ \frac{u-v}{h}\bigg) - S(t)v\Bigg).
    \end{align*}
 Dividing by $h>0$, the claim follows.
\end{proof}

We now turn our focus to continuity properties of the family $K=(K(t))_{t\geq0}$, which, by Lemma \ref{lem.Kconvex}, {imply} continuity properties for the semigroup $S=(S(t))_{t\geq0}$.

\begin{definition}[Strong right-continuity]\label{def.strong-right}
We say that a family $K=(K(t))_{t\geq0}$ of operators $X\to Y$ is \emph{strongly right-continuous} if, for all sequences $(t_n)_{n\in \N}\subset (0,\infty)$ with $t_n\to 0$ as $n\to \infty$ and $(u_n)_{n\in \N} \subset X$ with $u_n\to u_0\in X$ as $n\to \infty$ in the topology $\tau_X$, {we have}
\begin{equation}
    \lim_{n\to \infty} K(t_n)u_n = u_0 \quad\text{in the topology }\tau_Y.
\end{equation}
\end{definition}

\begin{lemma}\label{lem.right-cont-S}
 Let $S$ be a $K$-convex semigroup with strongly right-continuous $K$ and $S(t)0\geq 0$ for all $t\geq 0$. Then, $S$ strongly right-continuous.
\end{lemma}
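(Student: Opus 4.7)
The plan is to apply the seminorm estimate from Lemma \ref{lem.Kconvex} to reduce strong right-continuity of $S$ to strong right-continuity of $K$, exploiting the fact that $\tau_Y$ is generated by lattice seminorms. Concretely, I would fix sequences $(t_n)_{n\in \N}\subset (0,\infty)$ with $t_n\to 0$ and $(u_n)_{n\in \N}\subset X$ with $u_n\to u_0$ in $\tau_X$, and verify that $S(t_n)u_n\to u_0$ in $\tau_Y$ by showing that $p(S(t_n)u_n-u_0)\to 0$ for every lattice seminorm $p\colon Y\to [0,\infty)$ generating $\tau_Y$.

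Since the hypothesis $S(t)0\geq 0$ is in force, Lemma \ref{lem.Kconvex} applies and yields
\[
 p\big(S(t_n)u_n - u_0\big)\leq p\big(K(t_n)u_n - u_0\big) + p\big(K(t_n)(-u_n) + u_0\big)
\]
for each $n\in \N$. The first term on the right-hand side tends to $0$ by the strong right-continuity of $K$ applied to the sequence $(u_n)_{n\in \N}$ converging to $u_0$ in $\tau_X$. The second term is handled analogously: the sequence $(-u_n)_{n\in \N}$ converges to $-u_0$ in $\tau_X$ (this uses that $\tau_X$ is a vector space topology, so negation is continuous), hence $K(t_n)(-u_n)\to -u_0$ in $\tau_Y$, which in turn gives $p\big(K(t_n)(-u_n)+u_0\big)\to 0$.

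Combining these two limits shows $p\big(S(t_n)u_n-u_0\big)\to 0$ for every lattice seminorm $p$ generating $\tau_Y$, establishing the desired convergence $S(t_n)u_n\to u_0$ in $\tau_Y$. There is no real obstacle here: Lemma \ref{lem.Kconvex} was precisely designed to transfer continuity from $K$ to $S$, and the only things to keep track of are that the estimate is valid (which requires the standing assumption $S(t)0\geq 0$) and that the assumption of strong right-continuity of $K$ can be invoked for both $u_n$ and $-u_n$.
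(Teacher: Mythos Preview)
Your proof is correct and matches the paper's argument essentially line for line: both apply Lemma~\ref{lem.Kconvex} with $u=u_n$ and $v=u_0$, note that $-u_n\to -u_0$ in $\tau_X$ by continuity of negation in a topological vector space, and then invoke the strong right-continuity of $K$ on both sequences to make each term on the right-hand side vanish.
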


\begin{proof}
 Let $(t_n)_{n\in \N}\subset (0,\infty)$ with $t_n\to 0$ as $n\to \infty$ and $(u_n)_{n\in \N} \subset X$ with $u_n\to u_0\in X$ as $n\to \infty$ in the topology $\tau_X$. Since $X$ is a locally convex space, hence a topological vector space, also $-u_n\to -u_0$ as $n\to \infty$ in the topology $\tau_X$.\ By Lemma \ref{lem.Kconvex}, we therefore obtain that
 \[
 p\big(S(t_n)u_n-u_0\big)\leq p\big(K(t_n)u_n-u_0\big)+p\big(K(t_n)(-u_n)+u_0\big)\to 0\quad\text{as }n\to \infty
 \]
 for every $\tau_Y$-continuous lattice seminorm $p\colon Y\to [0,\infty)$.
\end{proof}

In the following, we consider the infinitesimal generator of the semigroup $S=(S(t))_{t\geq0}$ in the coarser topology $\tau_Y$. 

\begin{definition}[Infinitesimal generator]\label{def:generator}
    Let $S$ be a monotone semigroup on $X$. The \textit{infinitesimal generator} $L\colon D(L)\subset X\to Y$ of $S$ is defined by
    \begin{align*}
     D(L)&\coloneqq\bigg\{u\in X\,\bigg|\, \lim_{t \downarrow 0} \frac{S(t)u - u}t\in Y \text{ exists w.r.t.\ }\tau_Y\bigg\}\quad\text{and}\\
        Lu &\coloneqq \lim_{t \downarrow 0} \frac{S(t)u - u}{t}\quad\text{in the topology }\tau_Y\text{ for }u\in D(L).
    \end{align*}
\end{definition}

Given an operator $L\colon D\subset X\to Y$, we are interested in solutions to the following abstract differential equation
\begin{equation}\label{eq:abstract_eq}
    u'(t) = Lu(t) \quad \text{for all } t>0.
\end{equation}
In the following, we introduce a solution concept to equations of the type \eqref{eq:abstract_eq}, which generalizes the classical notion of a viscosity solution, cf.\ \cite{CrLi83,CIL92}, to our setup.

\begin{definition}[Viscosity solution]
    Let $L \colon D \subset X \rightarrow Y$ be an operator and $M \subset Y'_+$.
    \begin{enumerate}[(i)]
        \item We say that $u \colon [0, \infty) \rightarrow X$ is a \textit{$D$-$M$-viscosity subsolution} to equation \eqref{eq:abstract_eq} if $u$ is continuous and, for all $t>0$, $\mu \in M$, and every differentiable function $\varphi \colon (0, \infty) \rightarrow X$ with $\varphi(t) \in D$, $\mu\varphi(t) = \mu u(t)$, and $\varphi(s) \geq u(s)$ for all $s >0$, we have
        \begin{equation*}
            \mu\varphi'(t) \leq \mu L \varphi (t).
        \end{equation*}
        
        \item We say that $u \colon [0, \infty) \rightarrow X$ is a \textit{$D$-$M$-viscosity supersolution} to equation \eqref{eq:abstract_eq} if $u$ is continuous and, for all $t>0$, $\mu \in M$, and every differentiable function $\varphi \colon (0, \infty) \rightarrow X$ with $\varphi(t) \in D$, $\mu\varphi(t) = \mu u(t)$, and $\varphi(s) \leq u(s)$ for all $s >0$, we have
        \begin{equation*}
            \mu\varphi'(t) \geq \mu L \varphi (t).
        \end{equation*}

        \item We say $u$ is a \textit{$D$-$M$-viscosity solution} to equation \eqref{eq:abstract_eq} if $u$ is a $D$-$M$-viscosity sub- and supersolution.
    \end{enumerate}
\end{definition}

\begin{remark}
Consider the setup from Remark \ref{rem.mixed.topology}, and let $D:={\rm C}_{\rm b}^2(\R^d)$ be the space of all twice differentiable functions with bounded continuous derivatives. Then, $\varphi\colon (0,\infty)\to {\rm C}_{\rm b}(\R^d)$ is differentiable with $\varphi(t)\in D$ for all $t>0$ if, e.g., $\varphi\in {\rm C}_{\rm b}^{1,2}\big((0,\infty)\times \R^d\big)$. Indeed, let $\varphi\in {\rm C}_{\rm b}^{1,2}\big((0,\infty)\times \R^d\big)$. Then, by definition, $\varphi(t)\in {\rm C}_{\rm b}^2(\R^d)$ for all $t>0$. It remains to show that $\varphi\colon (0,\infty)\to {\rm C}_{\rm b}(\R^d)$ is differentiable, where ${\rm C}_{\rm b}(\R^d)$ is endowed with the mixed topology, cf.\ Remark \ref{rem.mixed.topology}. To that end, let $t>0$ and $(h_n)_{n\in \N}\subset \R\setminus \{0\}$ with $h_n\to 0$ as $n\to \infty$ and $t+h_n>0$ for all $n\in \N$. Then, by Taylor's theorem, it follows that
\[
 \bigg|\frac{\varphi(t+h_n,x)-\varphi(t,x)}{h_n}\bigg|\leq \sup_{s>0}|\partial_t \varphi(s,x)|
\]
for all $x\in \R^d$, where $\partial_t \varphi$ denotes the partial derivative in the time variable. Hence,
\[
\sup_{n\in \N} \bigg\|\frac{\varphi(t+h_n)-\varphi(t)}{h_n}\bigg\|_\infty\leq \|\partial_t \varphi(s,x)\|_\infty.
\]
It remains to prove that the difference quotient $\frac{\varphi(t+h_n,x)-\varphi(t,x)}{h_n}$ converges to $\partial_t\varphi(t,x)$ uniformly on space compacts or, equivalently, 
\[
\frac{\varphi(t+h_n,x_n)-\varphi(t,x_n)}{h_n}\to \partial_t \varphi(t,x)
\]
for any sequence $(x_n)_{n\in \N}\subset \R^d$ with $x_n\to x\in \R^d$ as $n\to \infty$. To that end, let $(x_n)_{n\in \N}\subset \R^d$ with $x_n\to x\in \R^d$ as $n\to \infty$. Then, since $\partial_t\varphi \colon (0,\infty)\times \R^d\to \R$ is continuous, by the intermediate value theorem and the fundamental theorem of calculus, it follows that
\[
 \frac{\varphi(t+h_n,x_n)-\varphi(t,x_n)}{h_n}=\frac1{h_n}\int_t^{t+h_n} \partial_t \varphi(s,x_n)\,\dd s\to \partial_t \varphi(t,x) \quad \text{as }n\to \infty.
\]
Hence, the standard class of test functions from viscosity theory is contained in our class of test functions. In particular, in this situation, uniqueness of viscosity solutions can be concluded from classical comparison principles, see, e.g.,\ \cite{CIL92}.
\end{remark}

\section{Main result}\label{sec:main_result}

We are now ready to state and prove our main result, which is a generalisation of \cite[Theorem 6.2]{GoNeRo24} for convex monotone semigroups in the mixed topology, \cite[Theorem 4.5]{NeRo21} for upper envelopes of Feller semigroups, and \cite[Proposition 5.11]{DeKuNe20} in the context of sublinear convolution semigroups.

\begin{theorem}\label{thm:existence}
    Let $S=(S(t))_{t\geq 0}$ be a $K$-convex monotone semigroup with $S(t)0\geq0$ for all $t\geq0$, generator $L\colon D(L)\subset X\to Y$, and a strongly right-continuous family $K=(K(t))_{t\geq 0}$.\ Moreover, let $M\subset Y_+'$ and $D\subset D(L)$ be nonempty.\ Then, for every $u_0 \in X$, the function $u \colon [0, \infty) \to X, \; t \mapsto S(t)u_0$ is a $D$-$M$-viscosity solution to the abstract initial value problem
    \begin{align*}
        u'(t) &\, =\, Lu(t) \quad \text{for all } t>0,\\
        u(0) \, & \,=u_0.
    \end{align*}
\end{theorem}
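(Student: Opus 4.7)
My plan is to split the proof into three steps: establishing continuity of $u(t) \coloneqq S(t)u_0$, verifying the subsolution property, and then obtaining the supersolution property by an analogous argument. The key technical inputs will be Lemma \ref{lem.right-cont-S} (to handle limits of $S(h)$-terms) and Lemma \ref{lem.difference-quotient} (to control cross-differences of the semigroup via the family $K$).

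For continuity, I would use the semigroup property: for $t \geq 0$ and $h \downarrow 0$, the identity $u(t+h) = S(h)u(t)$ and Lemma \ref{lem.right-cont-S} (applied to the constant sequence $u_n = u(t)$) yield $u(t+h) \to u(t)$ in $\tau_Y$, giving right-continuity in the coarser topology, which is what the subsequent viscosity inequalities actually use.

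For the subsolution property, fix $t>0$, $\mu \in M$, and a test function $\varphi$ with $\varphi(t) \in D$, $\mu\varphi(t) = \mu u(t)$, and $\varphi(s) \geq u(s)$ for all $s>0$. For $h \in (0,t)$, monotonicity of $S(h)$ together with $\varphi(t-h) \geq u(t-h)$ and the semigroup property give $S(h)\varphi(t-h) \geq S(h)u(t-h) = u(t)$; applying $\mu \in Y_+'$ and then dividing by $h$ produces
\begin{equation*}
    0 \;\leq\; \mu\,\frac{S(h)\varphi(t-h) - \varphi(t)}{h} \;=\; \mu\,\frac{S(h)\varphi(t-h) - S(h)\varphi(t)}{h} \;+\; \mu\,\frac{S(h)\varphi(t) - \varphi(t)}{h}.
\end{equation*}
The second summand converges to $\mu L\varphi(t)$ by the definition of the generator and continuity of $\mu$ on $(Y,\tau_Y)$. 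For the first summand I would invoke Lemma \ref{lem.difference-quotient} (with $S$-time $h$, upper argument $\varphi(t-h)$ and lower argument $\varphi(t)$) to obtain
\begin{equation*}
    \frac{S(h)\varphi(t-h) - S(h)\varphi(t)}{h} \;\leq\; K(h)\!\left(\varphi(t) - \frac{\varphi(t)-\varphi(t-h)}{h}\right) - S(h)\varphi(t).
\end{equation*}
By differentiability of $\varphi$ at $t$ in $\tau_X$, the argument of $K(h)$ converges to $\varphi(t)-\varphi'(t)$ in $\tau_X$; strong right-continuity of $K$ then forces $K(h)(\,\cdots) \to \varphi(t)-\varphi'(t)$ in $\tau_Y$, while Lemma \ref{lem.right-cont-S} gives $S(h)\varphi(t) \to \varphi(t)$ in $\tau_Y$. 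Continuity of $\mu$ delivers the upper bound $\limsup_{h\downarrow 0}\mu(\cdots)/h \leq -\mu\varphi'(t)$, and combining with the displayed inequality yields $\mu\varphi'(t) \leq \mu L\varphi(t)$, as desired.

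The supersolution inequality follows the same blueprint with the roles of $\varphi$ and $u$ reversed: from $\varphi(t-h) \leq u(t-h)$ and monotonicity I obtain $\mu(\varphi(t) - S(h)\varphi(t-h)) \geq 0$, decompose as $-\mu(S(h)\varphi(t)-\varphi(t))/h + \mu(S(h)\varphi(t)-S(h)\varphi(t-h))/h$, and apply Lemma \ref{lem.difference-quotient} with the roles of $u$ and $v$ swapped to control the second term from above. The main obstacle, and where $K$-convexity is doing the real work, is precisely the cross-difference $\mu\bigl(S(h)\varphi(t) - S(h)\varphi(t-h)\bigr)/h$: absent a Lipschitz or contraction estimate on $S$, one cannot directly pass this quantity to the limit, and the only handle we have is the $K$-convex bound from Lemma \ref{lem.difference-quotient}, whose viability rests on strong right-continuity of $K$ in the coarser topology $\tau_Y$ rather than on any direct regularity of $S$ itself.
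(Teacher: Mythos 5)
Your proposal is correct and follows essentially the same route as the paper's proof: the same decomposition of the difference quotient, the same application of Lemma \ref{lem.difference-quotient} to the cross-difference $\bigl(S(h)\varphi(t-h)-S(h)\varphi(t)\bigr)/h$, and the same passage to the limit via strong right-continuity of $K$, Lemma \ref{lem.right-cont-S}, and the definition of the generator. The only cosmetic difference is that you apply $\mu$ at the outset, using $\mu\varphi(t)=\mu u(t)$ to absorb the term $\bigl(\varphi(t)-u(t)\bigr)/h$ immediately, whereas the paper carries that term along and discards it only at the end.
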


\begin{proof}
    Fix $t > 0$ and $\mu \in M$. In a first step, we show that $u$ is a  subsolution. To that end, 
   let $\mu\in M$ and $\varphi \colon (0, \infty) \to X$ be differentiable with $\varphi(t) \in D$, $\mu\varphi(t) = \mu u(t)$, and $\varphi(s) \geq u(s)$ for all $s >0$. Then, using the semigroup property, for $h \in (0, 1)$ with $h<t$, we get
    \begin{align*}
        0 &= \frac{S(h)S(t-h) u_0 - S(t)u_0}{h} = \frac{S(h)u(t-h) - u(t)}{h} \leq \frac{S(h)\varphi(t-h) - u(t)}{h}\\
        &= \frac{S(h)\varphi(t-h) - S(h)\varphi(t)}{h} + \frac{S(h)\varphi(t) - \varphi(t)}{h} + \frac{\varphi(t) - u(t)}{h}\\
        &\leq K(h) \bigg( \varphi(t) + \frac{\varphi(t-h) - \varphi(t)}{h} \bigg) - S(h)\varphi(t) + \frac{S(h)\varphi(t) - \varphi(t)}{h} + \frac{\varphi(t) - u(t)}{h},
    \end{align*}
    where the last inequality follows from Lemma \ref{lem.difference-quotient} with $u=\varphi(t-h)$ and $v=\varphi(t)$.

    Using the strong right-continuity of $K$ and Lemma \ref{lem.right-cont-S}, we find that
    \begin{equation*}
         K(h) \left( \varphi(t) + \frac{\varphi(t-h) - \varphi(t)}{h} \right) - S(h)\varphi(t) \rightarrow -\varphi'(t)\quad \text{w.r.t.\ }\tau_Y\text{ as }h\downarrow 0.
    \end{equation*}
   On the other hand, since $\varphi(t)\in D\subset D(L)$, it follows that
    \begin{equation*}
        \frac{S(h)\varphi(t) - \varphi(t)}{h} \rightarrow L\varphi(t)\quad \text{w.r.t.\ }\tau_Y\text{ as }h\downarrow 0.
    \end{equation*}
    Since $\mu\in Y_+'$ and $\mu\varphi(t) = \mu u(t)$, we can thus conclude
    \begin{align*}
        0 &\leq \lim_{h\downarrow 0}
        \mu\Bigg(K(h) \bigg( \varphi(t) + \frac{\varphi(t-h) - \varphi(t)}{h} \bigg) - S(h)\varphi(t) \Bigg) + \lim_{h\downarrow 0}\mu\bigg(\frac{S(h)\varphi(t) - \varphi(t)}{h}\bigg)\\
        &
        =\mu(-\varphi'(t)) + \mu L\varphi(t)=-\mu \varphi'(t)+\mu L\varphi(t).
    \end{align*}
    Hence, $\mu \varphi'(t)\leq \mu L\varphi(t)$.\
    Next, we show that $u$ is a supersolution. Now, let $\mu\in M$ and $\varphi \colon (0, \infty) \to X$ be differentiable with $\varphi(t) \in D$, $\mu\varphi(t) = \mu u(t)$, and $\varphi(s) \leq u(s)$ for all $s >0$. Then, for $h \in (0, 1)$ with $h<t$, the semigroup property implies that
    \begin{align*}
        0 &= \frac{S(t)u_0 - S(h)S(t-h) u_0 }{h} = \frac{u(t) - S(h)u(t-h)}{h} \leq \frac{u(t) - S(h)\varphi(t-h)}{h} \\
        &= \frac{u(t) - \varphi(t)}{h} + \frac{\varphi(t) - S(h)\varphi(t)}{h} + \frac{S(h)\varphi(t) - S(h)\varphi(t-h)}{h}\\
        &\leq \frac{u(t) - S(h)\varphi(t-h)}{h} + \frac{\varphi(t) - S(h)\varphi(t)}{h}\\
        &\quad + K(h)\bigg( \varphi(t - h) + \frac{\varphi(t) - \varphi(t - h)}{h} \bigg) - S(h)\varphi(t - h),
    \end{align*}
    where, in the last inequality, we used Lemma \ref{lem.difference-quotient} with $u=\varphi(t)$ and $v=\varphi(t-h)$. Since $\varphi(t)\in D\subset D(L)$, it follows that  
    \begin{equation*}
        \frac{\varphi(t) - S(h)\varphi(t)}{h} \to -L\varphi(t)\quad\text{w.r.t.\ }\tau_Y\text{ as }h\downarrow 0.
    \end{equation*}
    Using the fact that the family $K$ is strongly right-continuous together with Lemma \ref{lem.right-cont-S}, we find that
    \begin{equation*}
       K(h)\bigg( \varphi(t - h) + \frac{\varphi(t) - \varphi(t - h)}{h} \bigg) - S(h)\varphi(t - h)  \to \varphi'(t)\quad \text{w.r.t.\ }\tau_Y\text{ as }h\downarrow 0.
    \end{equation*}
    Since $\mu\in Y_+'$ with $\mu\varphi(t) = \mu u(t)$, we can thus conclude
    \begin{align*}
        0 & \leq \lim_{h\downarrow 0} \mu\bigg(\frac{\varphi(t)-S(h)\varphi(t)}{h}\bigg)+\lim_{h\downarrow 0}\mu\Bigg( K(h)\bigg( \varphi(t - h) + \frac{\varphi(t) - \varphi(t - h)}{h} \bigg)\\
        &\quad - S(h)\varphi(t - h) \Bigg)= \mu\big(-L\varphi(t)\big) + \mu \varphi'(t)= -\mu L\varphi(t)+ \mu \varphi'(t).
    \end{align*}
    Hence, $\mu L\varphi(t)\leq \mu\varphi'(t)$. We have therefore shown that $u$ a $D$-$M$-viscosity solution.
\end{proof}

\section{Examples}\label{sec:examples}

In this section, we illustrate that our class of nonlinear semigroups and thus our main result, Theorem \ref{thm:existence}, comprises a large class of value functions of optimal control problems under uncertainty. Hence, they can be characterized as viscosity solutions to the abstract Cauchy problem given in terms of their generators.

In Sections \ref{sec:levy} and \ref{sec:OU}, we treat a drift-controlled infinite-dimensional L\'evy process and drift-controlled infinite-dimensional Ornstein-Uhlenbeck process under volatility uncertainty, respectively.

\subsection{Infinite dimensional control problem with L\'evy dynamics}\label{sec:levy}

Let $H$ be a separable Hilbert space and $(L_t)_{t\geq0}$ be an $H$-valued L\'evy process on a complete filtered probability space $(\Omega,\mathcal F, (\mathcal F_t)_{t\geq0},\P)$, satisfying the usual conditions. We consider the set $\mathcal A$ of all $H$-valued adapted stochstic processes $(\alpha_t)_{t\geq0}$ with $\int_0^t \|\alpha_s\|\,\dd s<\infty $ $\P$-a.s.\ for all $t\geq0$. For $\alpha\in \mathcal A$, $t\geq0$, and $x\in H$ we define the controlled dynamics as
\[
X_t^{x,\alpha}:=x+\int_0^t \alpha_s\,\dd s+L_t.
\]
Moreover, we fix a running cost $c\colon H\to [0,\infty]$ with $c(0)=0$ and $\lim_{\|a\|\to \infty} \frac{c(a)}{\|a\|}=\infty$, and consider the value function
\[
\big(S(t)u\big)(x):=\sup_{\alpha\in \mathcal A} \E\bigg(u\big(X_t^{x,\alpha}\big)-\int_0^t c(\alpha_s)\dd s\bigg)
\]
for all $u\in X:=\UCb(H)$, $t\geq0$, and $x\in H$, where $\UCb(H)$ denotes the space of all bounded uniformly continuous function $H\to \bR$. Moreover, let $Y$ be the space of all functions $H\to \R$, endowed with the topology of pointwise convergence and $M$ be the set of all Dirac measures with barycenters in $H$.

Let $t\geq0$, $u\in \UCb(H)$, and $\eps >0$. Then, there exists some $\delta>0$ such  that
\[
|u(x)-u(y)|<\eps\quad\text{for all }x,y\in H\text{ with }\|x-y\|<\delta.
\]
Then,
\[
\big|\big(S(t)u\big)(x)-\big(S(t)u\big)(y)\big|\leq \sup_{\alpha\in \mathcal A}|\E\big(u(X_t^{x,\alpha})-u(X_t^{y,\alpha})\big)|<\eps
\]
for all $x,y\in H$ with $\|x-y\|<\delta$, since $X_t^{x,\alpha}-X_t^{y,\alpha}=x-y$ for all $x,y\in H$ and $\alpha\in \mathcal A$. Moreover, $\|S(t)u\|_\infty\leq \|u\|_\infty$. We have therefore shown that $S(t)\colon \UCb(H)\to \UCb(H)$ is well-defined. By definition, $S(0)u=u$ for all $u\in \UCb(H)$ and one readily verifies that $S(t)$ is monotone for all $t\geq0$.

The semigroup property follows from the dynamic programming principle, see, e.g., \cite{KuNeSg25} for an elementary proof with convex $c$.

By Remark \ref{rem.Kconvex} a), as $S(t)$ is convex, we may choose $K(t):=S(t)$ for all $t\geq0$, so that $S$ is $K$-convex.

Next, we show that $K$ is strongly right-continuous.\ Since $\|S(t)u-S(t)v\|_\infty\leq \|u-v\|_\infty$ for all $f,g\in \UCb(H)$ and the space ${\rm Lip}_{\rm b}(H)$ of all bounded Lipschitz continuous functions $H\to \bR$ is dense in $\UCb(H)$, it suffices to show that
\[
\big|\big(S(t)u\big)(x)-u(x)\big|\to 0\quad\text{as }t\downarrow 0
\]
for all $u\in {\rm Lip}_{\rm b}(H)$ and $x\in X$.
To that end, first observe that, by assumption
\[
\overline c(M):=\sup_{a\in H} \big(M\|a\|-c(a)\big)<\infty\quad \text{for all }M\geq0.
\]
Hence, for all $u\in {\rm Lip}_{\rm b}(H)$ and $x\in X$,
\[
\big(S(t)u\big)(x)-\E \big(u(x+L_t)\big)\leq \sup_{\alpha\in \mathcal A} \E\bigg( M\int_0^t\|\alpha_s\|\,\dd s -\int_0^t c(\alpha_s)\,\dd s\bigg)\leq t \overline c\big(\|u\|_{\rm Lip}\big),
\]
where $\|u\|_{\rm Lip}$ denotes the smallest Lipschitz constant of a function $u\in \text{Lip}_{\rm b}(H)$. Moreover, choosing $\alpha_t\equiv 0$ for all $t\geq0$, we find
\[
\E \big(u(x+L_t)\big)-\big(S(t)u\big)(x)\leq 0.
\]
Now, the claim follows from the well-known strong continuity of the transition semigroup of the L\'evy process $(L_t)_{t\geq0}$ on $\UCb(H)$.

In a last step, we prove that, for all $f\in D:=\UCb^2(H)$ and $x\in H$, 
\[
\lim_{t\downarrow 0}\frac{\big(S(t)u\big)(x)-u(x)}{t}=(A u)(x)+c^*\big(\nabla u(x)\big),
\]
where $\UCb^2(H)$ denotes the space of all twice differentiable functions with bounded and uniformly continuous derivatives,
\[
 c^*(b):=\sup_{a\in H} \big(ba-c(a)\big)\quad\text{for all }b\in H,
\]
and 
\[
(Au)(x):=\lim_{t\downarrow0} \frac{\E\big(u(x+L_t)\big)-u(x)}{t}
\]
is the restriction of the infinitesimal generator of the L\'evy process $(L_t)_{t\geq0}$ to the space $\UCb^2(H)$.

By the L\'evy-Khinchin formula, cf.\ \cite[Theorem 5.7.3]{Li86},  
\[
(Au)(x)=\lim_{t\downarrow0} \frac{\E\big(u(x+L_t)\big)-u(x)}{t}
\]
exists for all $x\in H$ and $u\in \UCb^2(H)$. It therefore remains to show that
\[
\lim_{t\downarrow 0}\frac{\big(S(t)u\big)(x)-\E\big(u(x+L_t)\big)}{t}=c^*\big(\nabla u(x)\big)
\]
for all $x\in H$ and $u\in \UCb^2(H)$. 

To that end, first observe that, for $t\geq0$, $u \in \text{Lip}_{\rm b}(H)$, and $\alpha^*\in \mathcal A$ with
\[
    \bE \big(u(x + L_t)\big) \leq \sup_{\alpha \in \cA} \bE \bigg(u(X_t^{x,\alpha}) - \int_0^t c(\alpha_s) \dd s \bigg) \leq t+\bE \bigg(u(X_t^{x,\alpha^*}) - \int_0^t c(\alpha_s^*) \dd s \bigg),
\]
it follows that
\begin{equation}\label{eq:levy_ex:control_up}
    \bE \bigg( \frac1t \int_0^t c(\alpha^*_s) \dd s \bigg) \leq 1+\|u\|_{\text{Lip }} \bE\bigg(\frac1t \bigg\| \int_0^t \alpha^*_s \,\dd s\bigg\|\bigg).
\end{equation}
Additionally, using Jensen's inequality and the triangle inequality, we find
\begin{equation}\label{eq:levy_ex:control_down}
    \bE \bigg( \frac1t \int_0^t c(\alpha_s) \dd s \bigg) \geq \bE \bigg( \frac1t \int_0^t \cbarbar(\|\alpha_s\|) \dd s \bigg) \geq \cbarbar \Bigg(  \bE\bigg(\frac1t\bigg\|\int_0^t \alpha^*_s \dd s\bigg\| \bigg)\Bigg),
\end{equation}
where
\[
    \cbar (v) \coloneqq \sup_{a\in H}\big( v \|a\| - c(a)\big)\in [0,\infty)\quad\text{and}\quad \cbarbar (v) \coloneqq \sup_{w\geq0} \big(v w - \cbar(w)\big)\in [0,\infty]\quad\text{for all }v\geq0.
\]
Note that  $\cbarbar$ is convex with $\cbarbar(\|a\|) \leq c(a)$ for all $a\in H$.
Furthermore, $\cbarbar$ grows superlinearly, i.e., for $v\geq0$,
\[
    \frac{\cbarbar(v)}{v} = \sup_{w\geq0} \bigg(\frac{vw}{v} - \frac{\cbar(w)}v\bigg)=\sup_{w\geq0} \bigg(w - \frac{\cbar(w)}v\bigg) \rightarrow \infty\quad \text{as }v\to \infty.
\]
Taking together \eqref{eq:levy_ex:control_up} and \eqref{eq:levy_ex:control_down} implies that any $t$-optimal control $\alpha^*$ satisfies
\begin{equation}\label{eq:levy_ex:control_bound}
     \bE\bigg(\frac1t\bigg\|\int_0^t \alpha^*_s\, \dd s\bigg\|\bigg) \leq M_1,
\end{equation}
where the constant $M_1\geq0$ only depends on the running cost $c$ and $\|u\|_{\text{Lip}}$.\ This, in turn, implies that, for any $u \in \UCb^2(H)$, we have
\begin{align*}
    \frac{S(t)u(x) - \bE(x+L_t)}t &= \sup_{\alpha \in \cA_t^*} \frac1t \bE \bigg( \int_0^t \int_0^1 \nabla u\bigg(x+L_t+v\int_0^t\alpha_\nu \dd\nu\bigg)\alpha_s - c(\alpha_s)\,\dd v\,\dd s\bigg)\\
    &\leq \sup_{\alpha \in \cA_t^*} \bE \bigg(\int_0^1 c^*\bigg( \nabla u\bigg(x+L_t+v\int_0^t\alpha_\nu \dd\nu\bigg)\,\dd v \bigg)\\
    &\leq \sup_{\alpha \in \cA_t^*} \underbrace{\bE \Big(c^*\big(\nabla u (x+L_t)\big)\Big)}_{\xrightarrow{t\downarrow 0}\, c^*(\nabla u(x))} + \underbrace{\frac12 \|c^*\circ \nabla u\|_{\text{Lip }} \bE\bigg(\bigg\|\int_0^t \alpha_s \dd s\bigg\|\bigg)}_{\xrightarrow{t \downarrow 0}\, 0},
\end{align*}
where $\cA_t^* \subset \cA$ is the set of admissible controls satisfying \eqref{eq:levy_ex:control_bound}, the first term converges uniformly in $x\in H$ since $u \in \UCb^2(H)$ and $(L_t)_{t\geq0}$ is a L\'evy process, and the second term converges due to the boundedness of $\nabla u$, which implies that $c^*\circ \nabla u$ is Lipschitz continuous, and \eqref{eq:levy_ex:control_bound}.

On the other hand, considering constant controls and a $1$-optimal control $a^*\in H$ for $b\in H$ with $\|b\|\leq \|\nabla u\|_\infty$,

\[
    0 \leq \sup_{a \in A} \big(ba-c(a)\big) \leq 1 + ba^* - c(a^*).
\]
This, in turn, implies that $\|a^*\|\leq M_2$ with a constant $M_2\geq0$ depending only on the running cost $c$ and $\|\nabla u\|_\infty$. We then find that
\begin{align*}
    \frac{S(t)u(x) - \bE\big(u(x+L_t)\big)}t &\geq \sup_{\|a\|\leq M_2} \Bigg(\bE \bigg(\int_0^1 \nabla u(x+L_t+ vta)\, \dd v \bigg) a - c(a)\Bigg)\\
    &\geq \sup_{\|a\|\leq M_2} \Bigg(\bE \big(\nabla u(x + L_t)\big) a - c(a) - \frac12t \|a\|^2 \|\nabla u\|_{\text{Lip}}\Bigg)\\
    &=c^*\Big(\bE\big(\nabla u(x+L_t)\big)\Big)-\frac12 tM_2^2\|\nabla u\|_{\rm Lip}.
\end{align*}
We can therefore conclude that
\[
\lim_{t\downarrow0}\frac{S(t)u(x) - \bE(x+L_t)}t = c^*\big(\nabla u(x)\big)\quad \text{uniformly in }x\in H.
\]

\subsection{Distributionally robust optimal control problems with O-U dynamics}\label{sec:OU}

Again, let $H$ be a separable Hilbert space and $B=(B_t)_{t\geq 0}$ be a $Q$-Brownian Motion with trace class covariance operator $Q\colon H\to H$ on a complete filtered probability space $(\Omega,\mathcal F, (\mathcal F_t)_{t\geq0},\P)$, satisfying the usual assumptions.

Throughout, we consider a nonempty action set $A$, which is assumed to be a Borel space and a bounded nonempty subset $\Sigma\subset L(H)$, where $L(H)$ denotes the space of all bounded linear operators $H\to H$, and define the sets of admissible controls $\mathcal{A}$ and $\Theta$ as the set of all progressively measurable processes $\alpha\colon \Omega \times [0,\infty) \to A$ and $\theta \colon \Omega\times [0,\infty) \rightarrow \Sigma$, respectively, with
$\int_0^t\|\alpha_s\|\,\dd s<\infty$ $\P$-a.s.\ for all $t\geq0$.

Moreover, we consider a measurable function $b\colon H\times A\to H$ satisfying
\begin{align*}\label{eq:lip_growth_b_sig}
        & \|b(0, a)\| \leq C \quad \text{for all }a\in A,\\
	&\|b(x_1,a)-b(x_2,a)\|  \leq C\|x_1-x_2\|\quad \text{for all }a\in A\text{ and }x_1,x_2\in H.
\end{align*}

Then, for all initial values $x\in H$ and all admissible controls $\alpha\in \cA$ and $\theta \in \Theta$, we consider the controlled SPDE
    \begin{align*}
     \dd X_t^{x,\alpha, \theta} &= \Big(AX_{t}^{x,\alpha,\theta}+b\big(X_t^{x,\alpha, \theta}, \alpha_t\big)\Big)\,\dd t +\theta_t\,\dd B_t\quad \text{for } t\geq 0,\\
    X_0^{x,\alpha, \theta} &=x,
    \end{align*}
 where $A\colon D(A)\subset H\to H$ is the generator of a $C_0$-semigroup $(P_t)_{t\geq0}$ on $H$.\ Then, for all $x\in H$, $\alpha\in \cA$, and $\theta \in \Theta$, a  mild solution to the controlled SPDE is given by the implicit equation
\[
 X_t^{x,\alpha,\theta}=P_tx+\int_0^t P_{t-s} b\big(X_s^{x,\alpha,\theta},\alpha_s\big) \,\dd s+ \int_0^t P_{t-s} \theta_s\,\dd B_s\quad \text{for all }t\geq0.
\]
Again, we consider the space $X:={\rm UC}_{\rm b}(H)$, endowed with the supremum norm, and the space $Y$ of all functions $H\to \R$, endowed with the topology of pointwise convergence. As before, we choose $M$ to be the set of all Dirac measures with barycenters in $H$. For $u\in {\rm UC}_{\rm b}(H)$, we then consider the value function of the related Meyer-type stochastic optimal control problem 
\[
\big(S(t)u\big)(x):=\inf_{\theta \in \Theta}\sup_{\alpha\in \mathcal A}\mathbb E\big[u\big(X_t^{x,\alpha,\theta}\big)\big].
\]
Clearly $S(0)u=u$ for all $u\in {\rm UC}_{\rm b}(H)$. We first show that $S(t)\colon {\rm UC}_{\rm b}(H)\to {\rm UC}_{\rm b}(H)$ is well-defined for all $t\geq0$. This, however, is a direct consequence from the estimate
\[
e^{-\omega t}\big\|X_t^{x,\alpha,\theta}-X_t^{y,\alpha,\theta}\big\|\leq \|x-y\|+C \int_0^t e^{-\omega s}\big\|X_s^{x,\alpha,\theta}-X_s^{x,\alpha,\theta}\big\|\,\dd s\quad\P\text{-a.s.}
\]
for all $t\geq0$, $\alpha\in \mathcal A$, $\theta\in \Theta$, and $x,y\in H$ together with Gronwall's inequality.
The semigroup property follows again from the dynamic programming principle, see, e.g., \cite[Theorem 2.24, p. 110]{FaGoSw17}, for the case without additional volatility control.

For $t\geq0$, $u\in {\rm UC}_{\rm b}(H)$, and $x\in H$, let
\[
\big( K(t)u\big)(x):=\sup_{\theta \in \Theta}\sup_{\alpha\in \mathcal A}\mathbb E\big[u\big(X_t^{x,\alpha,\theta}\big)\big].
\]
Let $(t_n)_{n\in \N}\subset (0,\infty)$ with $t_n\to 0$ as $n\to \infty$ and $(u_n)_{n\in\N}\subset  {\rm UC}_{\rm b}(H)$ with $u_n\to u\in {\rm UC}_{\rm b}(H)$ uniformly as $n\to \infty$. We prove that $K(t_n)u_n\to u$. First, observe that $\|K(t_n)u_n-K(t_n)u\|_\infty\leq \|u_n-u\|_\infty$ for all $n\in \N$. It therefore remains to show that $\big(K(t_n)u\big)(x)\to u(x)$ for all $x\in H$. Since ${\rm Lip}_{\rm b}(H)$ is dense in ${\rm UC}_{\rm b}(H)$, we may additionally assume that $u\in {\rm Lip}_{\rm b}(H)$. Then, the claim follows from the estimate
\begin{align*}
\E\big[\big\| X_t^{x,\alpha,\theta} -x\|\big]&\leq \|P_t x-x\|+ \int_0^t e^{\omega(t-s)}\big(C(1+\|x\|) +\E\big[\big\| X_t^{x,\alpha,\theta} -x\|\big]\big)\,\dd s\\
&\quad +\bigg(\int_0^t e^{-2\omega (t-s)}\|\theta_s\|^2_{L(H)}{\rm tr} (Q)\,\dd s\bigg)^{1/2}
\end{align*}
for all $t\geq 0$, $x\in H$, $\alpha\in \mathcal A$, and $\theta\in \Theta$ together with Gronwall's lemma.

Using It\^o's lemma, the fact that $b$ is uniformly Lipschitz in the action $a\in A$ and $\Sigma\subset L(H)$ is bounded, it follows that
\[
\lim_{t\downarrow 0}\frac{\big(S(t)u\big)(x)-u(x)}{t}= \langle x,A^*\nabla u(x)\rangle+\sup_{a\in A}\langle b(x,a),\nabla u(x)\rangle+\inf_{\sigma\in \Sigma} \frac12 {\rm tr}\big(\sigma^*Q\sigma\nabla^2u(x)\big)
\]
for all $u\in {\rm UC}_{\rm b}^2(H)$ with $\nabla u(x)\in D(A^*)$ for all $x\in H$ and $A^*\nabla u\in {\rm UC}_{\rm b}(H)$.

%\newpage
\printbibliography

\end{document}